\begin{document}

\title*{Infinite Products Involving Binary Digit Sums}
% Use \titlerunning{Short Title} for an abbreviated version of
% your contribution title if the original one is too long
\author{Samin Riasat}
% Use \authorrunning{Short Title} for an abbreviated version of
% your contribution title if the original one is too long
\institute{Samin Riasat \at University of Waterloo, Waterloo, Ontario, Canada, \email{sriasat@uwaterloo.ca}}
%\and Name of Second Author \at Name, Address of Institute \email{name@email.address}}
%
% Use the package "url.sty" to avoid
% problems with special characters
% used in your e-mail or web address
%
\maketitle

\abstract{Let $(u_n)_{n\ge 0}$ denote the Thue-Morse sequence with values $\pm 1$. The Woods-Robbins identity below and several of its generalisations are well-known in the literature \begin{equation*}\label{WR}\prod_{n=0}^\infty\left(\frac{2n+1}{2n+2}\right)^{u_n}=\frac{1}{\sqrt 2}.\end{equation*} No other such product involving a rational function in $n$ and the sequence $u_n$ seems to be known in closed form.
  To understand these products in detail we study the function \begin{equation*}f(b,c)=\prod_{n=1}^\infty\left(\frac{n+b}{n+c}\right)^{u_n}.\end{equation*} We prove some analytical properties of $f$. We also obtain some new identities similar to the Woods-Robbins product.
}

\abstract*{Let $(u_n)_{n\ge 0}$ denote the Thue-Morse sequence with values $\pm 1$. The Woods-Robbins identity below and several of its generalisations are well-known in the literature \begin{equation*}\label{WR}\prod_{n=0}^\infty\left(\frac{2n+1}{2n+2}\right)^{u_n}=\frac{1}{\sqrt 2}.\end{equation*} No other such product involving a rational function in $n$ and the sequence $u_n$ seems to be known in closed form.
  To understand these products in detail we study the function \begin{equation*}f(b,c)=\prod_{n=1}^\infty\left(\frac{n+b}{n+c}\right)^{u_n}.\end{equation*} We prove some analytical properties of $f$. We also obtain some new identities similar to the Woods-Robbins product.
}

%\abstract{Each chapter should be preceded by an abstract (10--15 lines long) that summarizes the content. The abstract will appear \textit{online} at \url{www.SpringerLink.com} and be available with unrestricted access. This allows unregistered users to read the abstract as a teaser for the complete chapter. As a general rule the abstracts will not appear in the printed version of your book unless it is the style of your particular book or that of the series to which your book belongs.\newline\indent
%Please use the 'starred' version of the new Springer \texttt{abstract} command for typesetting the text of the online abstracts (cf. source file of this chapter template \texttt{abstract}) and include them with the source files of your manuscript. Use the plain \texttt{abstract} command if the abstract is also to appear in the printed version of the book.}

\section{Introduction}
\label{sec:1}
%Use the template \emph{chapter.tex} together with the Springer document class SVMono (monograph-type books) or SVMult (edited books) to style the various elements of your chapter content in the Springer layout.

%Instead of simply listing headings of different levels we recommend to
%let every heading be followed by at least a short passage of text.
%Further on please use the \LaTeX\ automatism for all your
%cross-references and citations. And please note that the first line of
%text that follows a heading is not indented, whereas the first lines of
%all subsequent paragraphs are.

%Throughout this article $n$ denotes a non-negative integer. 

Let $s_k(n)$ denote the sum of the digits in the base-$k$ expansion of the non-negative integer $n$. Although we only consider $k=2$, our results can be easily extended to all integers $k\ge 2$. Put $u_n=(-1)^{s_2(n)}$. In other words, $u_n$ is equal to $1$ if the binary expansion of $n$ has an even number of $1$'s, and is equal to $-1$ otherwise. This is the so-called Thue-Morse sequence with values $\pm 1$. We study infinite products of the form
\begin{equation*}f(b,c):=\prod_{n = 1}^\infty\left(\frac{n+b}{n+c}\right)^{u_n}.\end{equation*}
%We will show in Sect.~\ref{sec:2} that $f(b,c)$ converges for any $b,c\in\mathbb C\setminus\{-1,-2,-3,\dots\}$. 

The only known non-trivial value of $f$ (up to the relations $f(b,b)=1$ and $f(b,c)=1/f(c,b)$) seems to be 
\begin{equation*}f\left(\frac 12,1\right)=\sqrt 2,%\quad\hbox{i.e.},\quad\prod_{n\ge 0}\left(\frac{2n+1}{2n+2}\right)^{u_n}=\frac{1}{\sqrt 2},\end{equation*}%, that is,\begin{equation*}\prod_{n\ge 0}\left(\frac{2n+1}{2n+2}\right)^{u_n}=\frac{1}{\sqrt 2},
\end{equation*}
which is the famous Woods-Robbins identity \cite{R,W}. %On the other hand, almost nothing is known \cite{A,AS1} about the very similar-looking product\begin{equation*}\prod_{n\ge 1}\left(\frac{2n}{2n+1}\right)^{u_n}=f\left(0,\frac 12\right).\end{equation*}
Several infinite products inspired by this identity were discovered afterwards (see, e.g., \cite{AS2,ASo}), but none of them involve the sequence $u_n$. In this paper we compute another value of $f$, namely,
\begin{equation*}f\left(\frac 14,\frac 34\right)=\frac 32.\end{equation*}
In Sect.~\ref{sec:2} we look at properties of the function $f$ and introduce a related function $h$. In Sect.~\ref{sec:3} we study the analytical properties of $h$. In Sect.~\ref{sec:4} we try to find infinite products of the form $\prod R(n)^{u_n}$ admitting a closed form value, with $R$ a rational function.

This paper forms the basis for the paper \cite{ARS}. While the purpose of \cite{ARS} is to compute new products of the forms $\prod R(n)^{u_n}$ and $\prod R(n)^{t_n}$, $t_n$ being the Thue-Morse sequence with values $0,1$, we restrict ourselves in this paper to studying products of the form $\prod R(n)^{u_n}$ in greater depth.

\section{General Properties of $f$ and a New Function $h$}
\label{sec:2}

We start with the following result on convergence.% from \cite{ARS}. 

\begin{lemma}\label{num-den}
Let 
$R \in\mathbb C(x)$ be a rational function such that the values $R(n)$ are defined and non-zero for integers $n \geq 1$. 
Then, the infinite product \, $\prod_n R(n)^{u_n}$ converges if and only if the numerator and 
the denominator of $R$ have same degree and same leading coefficient.
\end{lemma}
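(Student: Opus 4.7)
The plan is to take logarithms and reduce convergence of $\prod_n R(n)^{u_n}$ to convergence of the series $\sum_n u_n \log R(n)$, then analyse this series according to the degrees and leading coefficients of the numerator $p$ and denominator $q$ of $R$. Since $R(n) \neq 0$ for $n \geq 1$ and $R(n)$ approaches a fixed finite limit as $n \to \infty$, the principal branch of the logarithm is unambiguous on tails, and convergence of the product to a nonzero limit is equivalent to convergence of $\sum_n u_n \log R(n)$.

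For the necessity direction, set $d_1 = \deg p$ and $d_2 = \deg q$, with respective leading coefficients $a$ and $b$. As $n \to \infty$ one has $R(n) \to a/b$ if $d_1 = d_2$, and $|R(n)| \to 0$ or $\infty$ otherwise. Since the general term of a convergent series must tend to zero and $|u_n| = 1$, a necessary condition is $\log R(n) \to 0$, i.e.\ $R(n) \to 1$; this forces both $d_1 = d_2$ \emph{and} $a = b$. Any other case therefore makes the series (and hence the product) diverge.

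For sufficiency, assume $d_1 = d_2$ and $a = b$. Expanding the rational function gives $R(n) = 1 + \alpha/n + O(1/n^2)$ for some constant $\alpha$, hence $\log R(n) = \alpha/n + O(1/n^2)$ for $n$ large. The tail $\sum_n u_n \cdot O(1/n^2)$ converges absolutely by comparison with $\sum 1/n^2$. For the main term $\alpha \sum_n u_n/n$ I would invoke Dirichlet's test; the key input is a uniform bound on the partial sums of $u_n$. This follows directly from the defining recursion $u_{2n} = u_n$, $u_{2n+1} = -u_n$: pairing terms gives
\begin{equation*}
\sum_{n=0}^{2N-1} u_n \;=\; \sum_{k=0}^{N-1}\bigl(u_{2k} + u_{2k+1}\bigr) \;=\; 0,
\end{equation*}
and odd partial sums differ from the even ones by $\pm 1$. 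So all partial sums of $u_n$ lie in $\{-1,0,1\}$, Dirichlet's test yields convergence of $\sum_n u_n/n$, and combining with the absolutely convergent tail proves convergence of $\sum_n u_n \log R(n)$.

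The bulk of the work sits in the sufficiency direction; the only slightly delicate step is justifying the uniform bound on the Thue-Morse partial sums needed to apply Dirichlet's test, and I expect this to be the main (mild) obstacle. Everything else — the expansion of $R$ near infinity, the branch choice for $\log$, and the necessary-direction dichotomy — is routine once the bound is in hand.
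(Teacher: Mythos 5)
Your proof is correct, but it takes a different route from the paper's. The paper (deferring to \cite{ARS}, Lemma 2.1) first factors $R$ into degree-one pieces $\frac{n+b}{n+c}$ and then groups the factors at indices $2n$ and $2n+1$ in pairs; since $u_{2n}=u_n$ and $u_{2n+1}=-u_n$, the paired factor is $\left(\frac{(2n+b)(2n+1+c)}{(2n+c)(2n+1+b)}\right)^{u_n}$, whose logarithm is $O(1/n^2)$, so the $1/n$ term cancels identically and the resulting series converges absolutely. You instead keep the expansion $\log R(n)=\alpha/n+O(1/n^2)$ for the unfactored $R$ and dispose of the $\alpha\sum u_n/n$ term by Dirichlet's test, using the bound $\left|\sum_{n\le N}u_n\right|\le 1$ that follows from $u_{2k}+u_{2k+1}=0$. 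Both arguments are sound and both ultimately rest on the same recursion for $u_n$; the paper's pairing buys absolute convergence of the rearranged series and is the same manipulation that later produces the functional equation in Lemma~\ref{f}, while your Abel-summation route is the same device the paper itself uses to prove Lemma~\ref{lem:1}, and it generalises immediately to any $\pm 1$ sequence with bounded partial sums rather than relying on the specific dyadic structure. One small point to make explicit: for finitely many small $n$ the value $R(n)$ need not be near $1$, so the reduction to $\sum_n u_n\log R(n)$ should be stated for a tail of the product, with the finitely many remaining nonzero factors contributing a harmless nonzero constant.
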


\begin{proof} \iffalse
If the infinite product converges, then $R(n)$ must tend to $1$ when $n$ tends to infinity.
Thus the numerator and  the denominator of $R$ have the same degree and the same leading
coefficient.

Now suppose that the numerator and the denominator of $R$ have the same leading coefficient 
and the same degree. Decomposing them in factors of degree $1$, it suffices, for proving that the 
infinite product converges, to show that infinite products of the form 
\begin{equation*}\prod_{n\ge 1} \left(\frac{n+b}{n+c}\right)^{u_n}\end{equation*}
converge for complex numbers $b$ and
$c$ such that $n+b$ and $n+c$ do not vanish for any $n \geq 1$. Since the general factor of such
a product tends to $1$, it is equivalent, grouping the factors pairwise, to prove that the product

\begin{equation*}\prod_{n\ge 1} \left[\left(\frac{2n+b}{2n+c}\right)^{u_{2n}} 
\left(\frac{2n+1+b}{2n+1+c}\right)^{u_{2n+1}}\right]\end{equation*}
converges. Since $u_{2n} = u_n$ and $u_{2n+1} = - u_n$ we only need to prove that the infinite product 
\begin{equation*}\prod_{n\ge 1} \left(\frac{(2n+b)(2n+1+c)}{(2n+c)(2n+1+b)}\right)^{u_n}\end{equation*}
converges. Taking the (principal determination of the) logarithm, we see that
\begin{equation*}\log\left(\frac{(2n+b)(2n+1+c)}{(2n+c)(2n+1+b)}\right) = O\left(\frac{1}{n^2}\right)\end{equation*}
which gives the convergence result. \fi
See \cite{ARS}, Lemma 2.1.
\qed
\end{proof}

Hence $f(b,c)$ converges for any $b,c\in\mathbb C\setminus\{-1,-2,-3,\dots\}$. Using the definition of $u_n$ we see that $f$ satisfies the following properties.

\iffalse
\begin{svgraybox}
\textbf{Properties of $f$~} For any $b,c,d\in\mathbb C\setminus\{-1,-2,-3,\dots\}$,
\begin{enumerate}
\item\label{p1} $f(b,b)=1$.
\item\label{p2} $f(b,c)f(c,d)=f(b,d)$.
\item\label{p3} $\displaystyle f(b,c)=\left(\frac{c+1}{b+1}\right)f\left(\frac b2,\frac c2\right)f\left(\frac{c+1}{2},\frac{b+1}{2}\right)$.
\end{enumerate}
\end{svgraybox}
\fi

\begin{lemma}\label{f} For any $b,c,d\in\mathbb C\setminus\{-1,-2,-3,\dots\}$,
\begin{enumerate}
\item\label{p1} $f(b,b)=1$,
\item\label{p2} $f(b,c)f(c,d)=f(b,d)$,
\item\label{p3} $\displaystyle f(b,c)=\left(\frac{c+1}{b+1}\right)f\left(\frac b2,\frac c2\right)f\left(\frac{c+1}{2},\frac{b+1}{2}\right)$.
\end{enumerate}
\end{lemma}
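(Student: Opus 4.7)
The first two properties are essentially formal. For (\ref{p1}), every factor $((n+b)/(n+b))^{u_n}$ equals $1$. For (\ref{p2}), each of the products $f(b,c)$, $f(c,d)$, $f(b,d)$ converges by Lemma~\ref{num-den}, so we may multiply termwise and the ratios $(n+b)/(n+c) \cdot (n+c)/(n+d)$ telescope within each factor to $(n+b)/(n+d)$.

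The substantive identity is (\ref{p3}), and the plan is to exploit the self-similarity relations $u_{2n} = u_n$ and $u_{2n+1} = -u_n$ by splitting the product according to the parity of the index. Concretely, I would work with the partial product
\begin{equation*}
P_{2M}(b,c) \;=\; \prod_{n=1}^{2M}\left(\frac{n+b}{n+c}\right)^{u_n}
\end{equation*}
and separate it into even indices $n=2m$ ($1 \le m \le M$) and odd indices $n=2m+1$ ($0 \le m \le M-1$). Using $u_{2m}=u_m$, the even part rewrites as
\begin{equation*}
\prod_{m=1}^{M}\left(\frac{m+b/2}{m+c/2}\right)^{u_m},
\end{equation*}
which tends to $f(b/2,c/2)$. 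Using $u_{2m+1}=-u_m$, the odd part rewrites as
\begin{equation*}
\left(\frac{c+1}{b+1}\right)\prod_{m=1}^{M-1}\left(\frac{m+(c+1)/2}{m+(b+1)/2}\right)^{u_m},
\end{equation*}
where the prefactor comes from pulling out the $m=0$ term (which carries $u_0=1$ but enters with exponent $-u_0=-1$), and the remaining factor tends to $f((c+1)/2,(b+1)/2)$. Multiplying these two limits yields the claimed identity.

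The only non-cosmetic concern is the legitimacy of splitting an infinite product this way: one needs each of the two sub-products to converge independently so that the limit of $P_{2M}$ factors as a product of limits. This is guaranteed by Lemma~\ref{num-den}, since in each case the numerator and denominator of the displayed rational function share degree and leading coefficient. Once this is in hand, the rest is a bookkeeping exercise of matching the shifted parameters $b/2,c/2$ and $(c+1)/2,(b+1)/2$ to the definition of $f$ and isolating the boundary term $(c+1)/(b+1)$.
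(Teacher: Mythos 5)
Your proof is correct and follows essentially the same route as the paper: split the product by parity of the index, use $u_{2n}=u_n$ and $u_{2n+1}=-u_n$, and peel off the $n=1$ term to produce the prefactor $(c+1)/(b+1)$. The only difference is that you justify the rearrangement via partial products and the convergence guaranteed by Lemma~\ref{num-den}, a point the paper's proof passes over silently.
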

\begin{proof} The only non-trivial claim is part \ref{p3}. To see why it is true, note that $u_{2n} = u_n$ and $u_{2n+1} = - u_n$, so that
\begin{align*} f(b,c) &= \prod_{n = 1}^\infty\left(\frac{n+b}{n+c}\right)^{u_n}\\
&=\left(\frac{1+c}{1+b}\right)\prod_{n = 1}^\infty\left(\frac{2n+b}{2n+c}\right)^{u_n}\prod_{n=1}^\infty\left(\frac{2n+1+c}{2n+1+b}\right)^{u_n}\\
&=\left(\frac{1+c}{1+b}\right)\prod_{n = 1}^\infty\left(\frac{n+\frac b2}{n+\frac c2}\right)^{u_n}\prod_{n=1}^\infty\left(\frac{n+\frac{c+1}{2}}{n+\frac{b+1}{2}}\right)^{u_n}\\
&=\left(\frac{c+1}{b+1}\right)f\left(\frac b2,\frac c2\right)f\left(\frac{c+1}{2},\frac{b+1}{2}\right)
\end{align*}
as desired.
\qed
\end{proof}
One can ask the natural question: is $f$ the unique function satisfying these properties? What if we impose some continuity/analyticity conditions?

%\begin{conj} $f$ is the unique function on $\mathbb C\setminus\{-1,-2,-3,\dots\}$ satisfying properties \ref{p1}--\ref{p3}.\end{conj}

%\section{A New Function $h$}
%\label{sec:3}

Using the first two parts %\ref{p1} and \ref{p2} 
of Lemma~\ref{f} we get
\begin{equation*}f(b,c)f(d,e)=\frac{f(b,c)f(c,d)f(d,e)f(d,c)}{f(c,d)f(d,c)}=\frac{f(b,e)f(d,c)}{f(c,c)}=f(b,e)f(d,c).\end{equation*}
Hence the third part %\ref{p3} 
may be re-written as
\begin{equation}\label{f-h}f(b,c)=\displaystyle\frac{\displaystyle f\left(\frac b2,\frac{b+1}{2}\right)}{b+1}\bigg/ \frac{\displaystyle f\left(\frac c2,\frac{c+1}{2}\right)}{c+1}.\end{equation}
This motivates the following definition.

\begin{definition} Define the function 
\begin{equation}\label{hdef} h(x):=f\left(\frac x2,\frac{x+1}{2}\right).\end{equation}
\end{definition}
%Using Eq.~\eqref{f-h} we see that $f(b,c)$ can be computed using only the function $h$ %the following quantities via
%Note also that\begin{equation*}f\left(0,\frac 12\right)=h(0).\end{equation*}
Then Eqs.~\eqref{f-h} and \eqref{hdef} give the following result.

\begin{lemma}\label{ftoh} For any $b,c\in\mathbb C\setminus\{-1,-2,-3,\dots\}$,
\begin{equation}\label{f-h-1}f(b,c)=\frac{c+1}{b+1}\cdot\frac{h(b)}{h(c)}.\end{equation}
\end{lemma}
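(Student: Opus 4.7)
The statement is essentially a direct substitution, so my plan is very short. Equation~\eqref{f-h}, derived just above the definition of $h$, already reads
\begin{equation*}
f(b,c)=\frac{f\bigl(\tfrac b2,\tfrac{b+1}{2}\bigr)/(b+1)}{f\bigl(\tfrac c2,\tfrac{c+1}{2}\bigr)/(c+1)}.
\end{equation*}
By definition~\eqref{hdef}, the numerator of the big fraction is $h(b)/(b+1)$ and the denominator is $h(c)/(c+1)$. Substituting and simplifying yields $f(b,c)=\tfrac{c+1}{b+1}\cdot\tfrac{h(b)}{h(c)}$, which is exactly \eqref{f-h-1}.

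The only thing worth double-checking is that all four expressions involved are well-defined, i.e.\ that none of $b/2$, $(b+1)/2$, $c/2$, $(c+1)/2$ lies in $\{-1,-2,-3,\dots\}$ when $b,c\in\mathbb C\setminus\{-1,-2,-3,\dots\}$. But if, say, $b/2=-k$ for some positive integer $k$, then $b=-2k\in\{-2,-4,\dots\}$, contradicting the hypothesis on $b$; the same argument rules out the other three cases. Hence $h(b)$ and $h(c)$ are defined and nonzero (the convergence of each being guaranteed by Lemma~\ref{num-den}).

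So the real content of the lemma is hidden in the derivation of~\eqref{f-h}, which combined parts~\ref{p1}--\ref{p3} of Lemma~\ref{f}; once that identity is in hand, Lemma~\ref{ftoh} is just a rewriting via the definition of $h$. There is no genuine obstacle — the proof is one line of substitution, and I would present it as such.
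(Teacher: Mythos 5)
Your proof is correct and is exactly the paper's argument: the paper states that Lemma~\ref{ftoh} follows immediately from Eq.~\eqref{f-h} together with the definition~\eqref{hdef} of $h$, which is precisely your one-line substitution. Your additional check that $b/2$, $(b+1)/2$, $c/2$, $(c+1)/2$ avoid the negative integers is a sensible (if implicit in the paper) verification and does not change the approach.
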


So understanding $f$ is equivalent to understanding $h$, in the sense that each function can be completely evaluated in terms of the other. Moreover, taking $c=b+\frac 12$ in Eq.~\eqref{f-h-1} and then using Eq.~\eqref{hdef} gives the following result.

\begin{lemma}\label{fe} The function $h$ defined by Eq. \eqref{hdef} satisfies the functional equation
\begin{equation}\label{FE2} h(x)=\frac{x+1}{x+\frac 32}h\left(x+\frac 12\right)h(2x).\end{equation}
\end{lemma}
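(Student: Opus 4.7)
The plan is to follow the hint given just before the statement: specialise the identity of Lemma~\ref{ftoh} at $c = b + \tfrac12$ and re-interpret the left-hand side via the definition \eqref{hdef} of $h$.

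First I would write down Eq.~\eqref{f-h-1} with $c$ replaced by $b + \tfrac12$, obtaining
\begin{equation*}
f\!\left(b,\,b+\tfrac12\right) \;=\; \frac{b+\tfrac{3}{2}}{b+1}\cdot\frac{h(b)}{h(b+\tfrac12)}.
\end{equation*}
The key observation is that the left-hand side is itself a value of $h$: setting $x = 2b$ in Eq.~\eqref{hdef} gives $h(2b) = f(b, b+\tfrac12)$, because $\tfrac{x}{2} = b$ and $\tfrac{x+1}{2} = b+\tfrac12$. Substituting this in and solving for $h(b)$ yields
\begin{equation*}
h(b) \;=\; \frac{b+1}{b+\tfrac{3}{2}}\, h\!\left(b+\tfrac12\right) h(2b),
\end{equation*}
which after renaming $b$ as $x$ is exactly \eqref{FE2}.

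There is really no obstacle here beyond checking that both sides of the relation make sense, i.e.\ that $h$ is well-defined and finite at the points $x$, $x+\tfrac12$, and $2x$ appearing in \eqref{FE2}; this reduces, via the definition of $h$ and Lemma~\ref{num-den}, to excluding $x$ from a discrete set of values at which one of $\tfrac{x}{2}, \tfrac{x+1}{2}, x+\tfrac34, 2x, x+\tfrac12$ hits a non-positive integer. Aside from verifying this, the argument is a one-line rearrangement.
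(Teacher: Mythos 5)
Your proposal is correct and is precisely the paper's argument: the paper derives Lemma~\ref{fe} by "taking $c=b+\frac 12$ in Eq.~\eqref{f-h-1} and then using Eq.~\eqref{hdef}", which is exactly your substitution $h(2b)=f(b,b+\tfrac12)$ followed by rearrangement. Nothing further is needed.
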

Again one may ask: is $h$ the unique solution to Eq.~\eqref{FE2}? What about monotonic/continuous/smooth solutions?

An approximate plot of $h$ is given in Fig.~\ref{hx} with the infinite product truncated at $n = 100$. 
%, from which one would suspect that it is a smooth decreasing function on $(-2,\infty)$. We prove smoothness in the next section, and it will be shown in \cite{ARS} that $h$ is strictly decreasing on $[0,\infty)$.

\begin{figure}[h]
\centering
\sidecaption
\includegraphics[scale=.45]{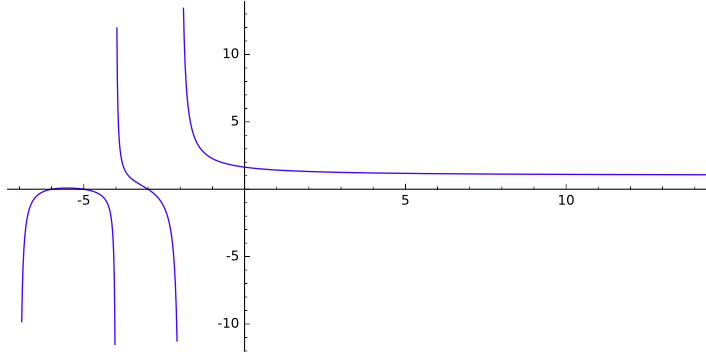}
\caption{Approximate plot of $h(x)$}
\label{hx}
\end{figure}

\iffalse
\begin{svgraybox}
\textbf{A New Function~} The function $h(x):=f(\frac x2,\frac{x+1}{2})$ satisfies the functional equation
\begin{equation*} h(x)=\frac{x+1}{x+\frac 32}h\left(x+\frac 12\right)h(2x).\end{equation*}
\end{svgraybox}
\fi
%\begin{conj}\label{h} $h$ is smooth and strictly decreasing on $(-2,\infty)$.\end{conj}

%\begin{conj} $h$ is the unique function on $\mathbb C\setminus\{-2,-3,-4,\dots\}$ satisfying Eq.~\eqref{FE2}.\end{conj}

%Here are some results we are able to prove.

\section{Analytical Properties of $h$}
\label{sec:3}

The following lemma forms the basis for the results in this section.

\begin{lemma} 
\label{lem:1}
For $b,c\in(-1,\infty)$,
\begin{enumerate}
\item\label{l1} if $b=c$, then $f(b,c)=1$.
\item\label{l2} if $b>c$, then
%\begin{equation*}
\[\left(\frac{c+1}{b+1}\right)^2<f(b,c)<1.\]%\end{equation*}
\item\label{l3} if $b<c$, then
%\begin{equation*}
\[1<f(b,c)<\left(\frac{c+1}{b+1}\right)^2.\]%\end{equation*}
\end{enumerate}%\begin{equation*}f(b,c)&<1\quad\hbox{if}\quad b>c\\ f(b,c)&>1\quad\hbox{if}\quad b<c\\ f(b,c)&=1\quad\hbox{if}\quad b=c.\end{equation*}
\end{lemma}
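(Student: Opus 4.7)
Part \ref{l1} is immediate from $f(b,b)=\prod 1^{u_n}=1$. For Part \ref{l3} I would apply Part \ref{l2} to the reversed pair $(c,b)$ and then invoke $f(b,c)=1/f(c,b)$, itself a direct consequence of Parts~\ref{p1}--\ref{p2} of Lemma~\ref{f}. So the entire content lies in Part \ref{l2}.

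The plan for Part \ref{l2} is Abel summation on $\log f(b,c)=\sum_{n\ge 1} u_n a_n$, where $a_n:=\log\tfrac{n+b}{n+c}$. The setup is routine: for $b>c>-1$ one has $a_n>0$ and
\[ a_n - a_{n+1} = \log\!\left(1 + \frac{b-c}{(n+c)(n+1+b)}\right) > 0, \]
so $(a_n)$ strictly decreases to $0$ and $\sum_{n\ge 1}(a_n-a_{n+1})$ telescopes to $a_1=\log\tfrac{1+b}{1+c}$. Setting $U_n:=\sum_{k=1}^n u_k$, Abel summation will give $\log f(b,c)=\sum_{n\ge 1} U_n(a_n-a_{n+1})$, with the boundary term $U_N a_N$ vanishing since $U_N$ stays bounded while $a_N=O(1/N)$.

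The real work --- and essentially the only step using anything specific about the Thue-Morse sequence --- is to pin down the exact range of $U_n$. I will use the classical fact that $S_N:=\sum_{k=0}^N u_k\in\{-1,0,1\}$ for all $N\ge 0$, which drops out in a few lines from $u_{2m}+u_{2m+1}=0$ by splitting on the parity of $N$ (odd $N$ gives $S_N=0$; even $N=2M$ gives $S_N=u_M=\pm 1$). Shifting, $U_n=S_n-u_0=S_n-1$ takes values in the asymmetric set $\{-2,-1,0\}$ for every $n\ge 1$. This one-sided negativity is precisely what produces the asymmetric bound $(c+1)^2/(b+1)^2$ of the lemma, rather than a symmetric envelope around $1$; I expect this identification of the range of $U_n$ to be the main conceptual obstacle.

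From here everything falls out. Each summand $U_n(a_n-a_{n+1})$ lies in $[-2(a_n-a_{n+1}),\,0]$, and summing together with the telescoped value $a_1$ produces $-2a_1\le\log f(b,c)\le 0$, which upon exponentiating is exactly the desired $\bigl(\tfrac{c+1}{b+1}\bigr)^2\le f(b,c)\le 1$. Strictness of both inequalities will come from the $n=1$ term alone: $U_1=-1$ lies strictly between $-2$ and $0$, and $a_1-a_2>0$, so neither extreme can be attained.
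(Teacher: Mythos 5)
Your proof is correct and takes essentially the same route as the paper: summation by parts on $\sum u_n a_n$ with $a_n$ positive and decreasing to $0$, combined with the key fact $U_n\in\{-2,-1,0\}$ derived from $u_{2m}+u_{2m+1}=0$, and the reduction of part \ref{l3} to part \ref{l2} via $f(b,c)=1/f(c,b)$. If anything, your treatment of strictness (isolating the $n=1$ term, where $U_1=-1$ is strictly interior) is more careful than the paper's, which obtains $-2a_1<S_N<0$ and then passes to the limit, a step that by itself only yields non-strict inequalities.
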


\begin{proof} Using Lemma~\ref{f} it suffices to prove the second statement.% \ref{l2}.

Let $b>c>-1$ %By Eq.~Eq.~\eqref{f-h}, \begin{equation}\label{f-h-1}f(b,c)=\frac{c+1}{b+1}\cdot\frac{h(b)}{h(c)}.\end{equation}
and put 
%\begin{equation*}f_N(b,c)=\prod_{n=1}^N\left(\frac{n+b}{n+c}\right)^{u_n}\end{equation*}and 
\begin{equation}\label{notation}a_n=\log\left(\frac{n+b}{n+c}\right),\quad S_N%=\log f_N(b,c)
=\sum_{n=1}^Na_nu_n,\quad U_N=\sum_{n=1}^Nu_n.\end{equation}
Note that $a_n$ is positive and strictly decreasing to $0$. Using $s_2(2n)+s_2(2n+1)\equiv 1\pmod 2$ it follows that $U_n\in\{-2,-1,0\}$ and $U_n\equiv n\pmod 2$ for each $n$. Using summation by parts,
\begin{equation*}S_N=a_{N+1}U_N+\sum_{n=1}^NU_n(a_n-a_{n+1}).\end{equation*}
So $-2a_1<S_N<0$ for large $N$. Exponentiating and taking $N\to\infty$ gives the desired result.
\qed\end{proof}

Lemmas \ref{ftoh}-\ref{lem:1} immediately imply the following results.

\begin{theorem}\label{t} $h(x)/(x+1)$ is strictly decreasing on $(-1,\infty)$ and $h(x)(x+1)$ is strictly increasing on $(-1,\infty)$.\end{theorem}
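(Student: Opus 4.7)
The plan is to derive both monotonicity statements as immediate algebraic consequences of Lemma \ref{ftoh} combined with the inequalities in Lemma \ref{lem:1}. The key observation is that the two quantities $h(x)/(x+1)$ and $h(x)(x+1)$ are designed precisely so that their ratios at two points simplify nicely in terms of $f$.

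For the first claim, I would start from the identity
\[f(b,c)=\frac{c+1}{b+1}\cdot\frac{h(b)}{h(c)}\]
of Lemma \ref{ftoh} and rearrange it as
\[\frac{h(b)/(b+1)}{h(c)/(c+1)}=f(b,c).\]
Now I would take any $b,c\in(-1,\infty)$ with $b>c$ and invoke part \ref{l2} of Lemma \ref{lem:1}, which asserts that $f(b,c)<1$ in this range. This immediately gives $h(b)/(b+1)<h(c)/(c+1)$, establishing strict decrease of $h(x)/(x+1)$ on $(-1,\infty)$.

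For the second claim, the analogous manipulation yields
\[\frac{h(b)(b+1)}{h(c)(c+1)}=\left(\frac{b+1}{c+1}\right)^{2}f(b,c).\]
Again taking $b>c>-1$, the left half of the inequality in Lemma \ref{lem:1}\ref{l2} gives $f(b,c)>\bigl((c+1)/(b+1)\bigr)^{2}$, so the right-hand side above exceeds $1$. Hence $h(b)(b+1)>h(c)(c+1)$, giving the claimed strict increase.

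There is no real obstacle here beyond recognising that the two expressions in the theorem are exactly the two sides of the sandwich in Lemma \ref{lem:1}, up to the factor $h(b)/h(c)$ from Lemma \ref{ftoh}; the whole argument reduces to substituting and comparing signs. It would be worth noting that the computation also shows these are essentially sharp: the monotonicity statements are equivalent to the upper and lower bounds in Lemma \ref{lem:1}, respectively.
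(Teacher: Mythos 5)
Your proof is correct and is exactly the argument the paper intends: the paper states that Lemmas~\ref{ftoh}--\ref{lem:1} ``immediately imply'' Theorem~\ref{t}, and your two rearrangements of the identity $f(b,c)=\frac{c+1}{b+1}\cdot\frac{h(b)}{h(c)}$ against the two halves of the sandwich in Lemma~\ref{lem:1}\ref{l2} are precisely the omitted computation.
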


\begin{theorem} For $b,c\in(-1,\infty)$, $f(b,c)$ is strictly decreasing in $b$ and strictly increasing in $c$.\end{theorem}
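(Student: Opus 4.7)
The plan is to combine Lemma~\ref{ftoh} with Theorem~\ref{t}; the author already flags that the two theorems follow jointly from Lemmas~\ref{ftoh}--\ref{lem:1}, so I expect the argument to be essentially a one-line repackaging.

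Concretely, I would set $g(x):=h(x)/(x+1)$ on $(-1,\infty)$. Lemma~\ref{ftoh} can then be rewritten as
\begin{equation*}
f(b,c)=\frac{c+1}{b+1}\cdot\frac{h(b)}{h(c)}=\frac{g(b)}{g(c)}.
\end{equation*}
Theorem~\ref{t} tells us that $g$ is strictly decreasing on $(-1,\infty)$. I also need that $g$ is positive there, which is immediate: for $x>-1$ and $n\ge 1$ both $n+x/2$ and $n+(x+1)/2$ are positive, so each factor in the infinite product defining $h(x)=f(x/2,(x+1)/2)$ is positive, whence $h(x)>0$ and $g(x)>0$.

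Given these two facts, the theorem is immediate. Fix $c\in(-1,\infty)$; then $b\mapsto f(b,c)=g(b)/g(c)$ is strictly decreasing because $g(b)$ is and $g(c)>0$ is a positive constant. Fix $b\in(-1,\infty)$; then $c\mapsto f(b,c)=g(b)/g(c)$ is strictly increasing because $g(c)$ is strictly decreasing and positive, so $1/g(c)$ is strictly increasing, and $g(b)>0$ is again a positive constant.

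There is no real obstacle here: everything is packaged into Theorem~\ref{t}, whose own proof did the analytic work via the summation-by-parts estimate of Lemma~\ref{lem:1}. The only care needed is to record the positivity of $h$, which is the reason one may divide by $g(c)$ and flip inequalities without sign issues.
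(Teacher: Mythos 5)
Your proof is correct and matches the paper's intent: the paper gives no explicit argument, merely asserting that Lemmas~\ref{ftoh}--\ref{lem:1} immediately imply the result, and your rewriting $f(b,c)=g(b)/g(c)$ with $g(x)=h(x)/(x+1)$ strictly decreasing and positive is exactly the kind of one-line deduction intended (one could equally argue directly from $f(b_1,c)/f(b_2,c)=f(b_1,b_2)$ and Lemma~\ref{lem:1}, which is the same computation in different clothing). Your care in recording the positivity of $h$ is appropriate and could also be obtained from Theorem~\ref{hbound}.
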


\begin{theorem}\label{hbound} For $x\in(-2,\infty)$, 
\[1<h(x)<\left(\frac{x+3}{x+2}\right)^2.\]\end{theorem}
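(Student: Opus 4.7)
The plan is to apply Lemma \ref{lem:1}(\ref{l3}) directly to the definition \eqref{hdef} of $h$. By definition, $h(x) = f(x/2,(x+1)/2)$, so I want to check that the pair $(b,c) := (x/2,(x+1)/2)$ meets the hypotheses of the lemma with $b < c$, and then read off the bounds after simplification.

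First I would verify admissibility of the arguments. For $x \in (-2,\infty)$, we have $x/2 > -1$ and consequently $(x+1)/2 > x/2 > -1$, so both $b$ and $c$ lie in $(-1,\infty)$. Second, since $c - b = 1/2 > 0$, the strict inequality $b < c$ holds, so Lemma \ref{lem:1}(\ref{l3}) gives
\[1 < f\!\left(\tfrac{x}{2},\tfrac{x+1}{2}\right) < \left(\frac{c+1}{b+1}\right)^{2}.\]
Finally I would simplify the right-hand side:
\[\frac{c+1}{b+1} = \frac{(x+1)/2 + 1}{x/2 + 1} = \frac{x+3}{x+2},\]
and substitute $h(x) = f(x/2,(x+1)/2)$ on the left to obtain the claimed bound. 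There is no real obstacle here; the theorem is an immediate corollary of Lemma \ref{lem:1} once the substitution $b = x/2$, $c = (x+1)/2$ is made, which is exactly why the author groups it with Theorems \ref{t} and the preceding monotonicity result as consequences of Lemmas \ref{ftoh}--\ref{lem:1}.
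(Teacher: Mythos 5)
Your proof is correct and matches the paper's intended argument: the paper states that Theorem \ref{hbound} follows immediately from Lemmas \ref{ftoh}--\ref{lem:1}, and your substitution $b=x/2$, $c=(x+1)/2$ into Lemma \ref{lem:1}(\ref{l3}), together with the simplification $(c+1)/(b+1)=(x+3)/(x+2)$, is exactly that derivation.
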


We now give some results on differentiability.

\begin{theorem}\label{smooth} $h(x)$ is smooth on $(-2,\infty)$.\end{theorem}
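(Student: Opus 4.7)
The plan is to take logarithms and show that the resulting series admits term-by-term differentiation to all orders, uniformly on compact subsets of $(-2,\infty)$. By Theorem~\ref{hbound}, $h(x) > 0$ on $(-2,\infty)$, so $\log h$ is well defined there, and since $h(x) = \prod_{n\ge 1}\bigl(\tfrac{2n+x}{2n+x+1}\bigr)^{u_n}$ we have
$$\log h(x) = \sum_{n=1}^\infty u_n\,\ell_n(x), \qquad \ell_n(x) := \log(2n+x) - \log(2n+x+1),$$
with the right-hand side converging pointwise on $(-2,\infty)$ in view of Lemma~\ref{num-den}.

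I would then differentiate $\ell_n$ formally to obtain
$$\ell_n^{(k)}(x) = (-1)^{k-1}(k-1)!\left[(2n+x)^{-k} - (2n+x+1)^{-k}\right]$$
for $k \geq 1$, and apply the mean value theorem to $y\mapsto y^{-k}$ to obtain the estimate $|\ell_n^{(k)}(x)| \leq k!/(2n+x)^{k+1}$. On any compact $K = [a,b] \subset (-2,\infty)$ we have $2n+x \geq 2n+a > 0$ for every $n \geq 1$, and for $n$ large (say $n \geq -a$) this bound is at most $k!/n^{k+1}$. Hence $\sum_n u_n \ell_n^{(k)}(x)$ converges absolutely and uniformly on $K$ for every $k \geq 1$.

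The standard theorem on interchanging limits and derivatives then gives $\log h \in C^\infty(-2,\infty)$, whence $h = \exp(\log h)$ is smooth on $(-2,\infty)$ as well. The only mildly subtle point is the gap between $k=0$ and $k\geq 1$: the $k=0$ series is only conditionally convergent (its terms are of order $1/n$), so one cannot simply invoke Weierstrass $M$-test from the start; the finite-difference structure of $\ell_n^{(k)}$ is precisely what unlocks the absolute $O(1/n^{k+1})$ decay from the first derivative onwards, and the $k=0$ pointwise convergence is supplied separately by Lemma~\ref{num-den}.
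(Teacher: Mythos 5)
Your proof is correct and follows essentially the same route as the paper's: both pass to $\log h$, differentiate the series term by term, and exploit the difference structure $(2n+x)^{-k}-(2n+x+1)^{-k}=O(n^{-k-1})$ to obtain uniform convergence of every differentiated series, with the $k=0$ convergence supplied separately by the convergence of the product. The only immaterial difference is that the paper gets a uniform Cauchy estimate on all of $(-2,\infty)$ via a telescoping comparison, while you use the mean value theorem and the Weierstrass $M$-test on compact subsets, which suffices since smoothness is a local property.
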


\begin{proof} Recall the definition of $h$:
\begin{equation*}h(x)=\prod_{n=1}^\infty\left(\frac{2n+x}{2n+1+x}\right)^{u_n}.\]
Then taking $b=x/2$ and $c=(x+1)/2$ in Eqs.~\eqref{notation} shows that the sequence $S_n$ of smooth functions on $(-2,\infty)$ converges pointwise to $\log h$.

Differentiating with respect to $x$ gives
\[S_N'=\sum_{n=1}^N\frac{u_n}{(2n+x)(2n+1+x)}=\sum_{n=1}^Nu_n\left(\frac{1}{2n+x}-\frac{1}{2n+1+x}\right).\]
Hence
\begin{align*}\left|S_N'-S_M'\right|%&=\left|\sum_{n=M+1}^N\frac{u_n}{(2n+x)(2n+1+x)}\right|\\
%&\le\sum_{n=M+1}^N\frac{1}{(2n+x)(2n+1+x)}\\
%&\le\sum_{n=M+1}^N\frac{1}{(2n-1+x)(2n+1+x)}\\
&\le\sum_{n=M+1}^N\left(\frac{1}{2n+x}-\frac{1}{2n+1+x}\right)\\
&\le\sum_{n=M+1}^N\left(\frac{1}{2n-1+x}-\frac{1}{2n+1+x}\right)\\
&=\frac{1}{2M+1+x}-\frac{1}{2N+1+x}\\
&<\frac{1}{2M-1}
\to 0
\end{align*}
as $M\to\infty$, for any $x\in (-2,\infty)$ and $N>M$. Thus $S_n'$ converges uniformly on $(-2,\infty)$, which shows that $\log h$, hence $h$, is differentiable on $(-2,\infty)$.

Now suppose that derivatives of $h$ up to order $k$ exist for some $k\ge 1$. Note that
\[S_N^{(k+1)}=(-1)^kk!\sum_{n=1}^Nu_n\left(\frac{1}{(2n+x)^{k+1}}-\frac{1}{(2n+1+x)^{k+1}}\right).\]
As before,
\begin{align*}\left|S_N^{(k+1)}-S_M^{(k+1)}\right|
&\le k!\sum_{n=M+1}^N\left(\frac{1}{(2n+x)^{k+1}}-\frac{1}{(2n+1+x)^{k+1}}\right)\\
&\le k!\sum_{n=M+1}^N\left(\frac{1}{(2n-1+x)^{k+1}}-\frac{1}{(2n+1+x)^{k+1}}\right)\\
&=\frac{k!}{(2M+1+x)^{k+1}}-\frac{k!}{(2N+1+x)^{k+1}}\\
&<\frac{k!}{(2M-1)^{k+1}}
\to 0
\end{align*}
as $M\to\infty$, for any $x\in (-2,\infty)$ and $N>M$. Hence $S_n^{(k+1)}$ converges uniformly on $(-2,\infty)$, i.e., $h^{(k)}$ is differentiable on $(-2,\infty)$. 

Therefore, by induction, $h$ has derivatives of all orders on $(-2,\infty)$.
\qed\end{proof}

\begin{theorem}\label{series} Let $a\ge 0$. Then% function $\log h(x)$ has the Taylor expansion 
\begin{equation*}\log h(x)=\log h(a)+\sum_{k=1}^\infty\frac{(-1)^{k-1}}{k}\left(\sum_{n=2}^\infty\frac{u_n}{(n+a)^k}\right)(x-a)^k\end{equation*}
%for $|x-a|\le 2+a$.
for $x\in[a-1,a+1]$.
\end{theorem}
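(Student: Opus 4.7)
The plan is to apply Taylor's theorem to $g(x):=\log h(x)$ on the interval $[a-1,a+1]\subset(-2,\infty)$, identify the derivatives $g^{(k)}(a)$ with the inner series appearing in the statement, and then control the Lagrange remainder uniformly in $K$.

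First I would extract a closed form for $g^{(k)}$. From the proof of Theorem~\ref{smooth}, $\log h$ is smooth on $(-2,\infty)$ and the series $S_N=\sum_{n=1}^N u_n[\log(2n+x)-\log(2n+1+x)]$ may be differentiated termwise, giving
\begin{equation*}
g^{(k)}(x)=(-1)^{k-1}(k-1)!\sum_{n=1}^\infty u_n\Bigl[\frac{1}{(2n+x)^k}-\frac{1}{(2n+1+x)^k}\Bigr]
\end{equation*}
for every $k\ge 1$. Using $u_{2n}=u_n$ and $u_{2n+1}=-u_n$, the paired partial sums coincide with the partial sums $\sum_{m=2}^{2N+1} u_m/(m+x)^k$, so the bracketed series equals $\sum_{m\ge 2} u_m/(m+x)^k$ (absolutely for $k\ge 2$, as a limit of even-indexed partial sums for $k=1$). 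Evaluating at $x=a$ and dividing by $k!$ identifies the would-be Taylor coefficient exactly as $\frac{(-1)^{k-1}}{k}\sum_{n\ge 2} u_n/(n+a)^k$, matching the statement.

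Next I would invoke Taylor's theorem with Lagrange remainder: for each $K\ge 1$ and $x\in[a-1,a+1]$ there exists $\xi$ between $a$ and $x$ with
\begin{equation*}
g(x)-g(a)-\sum_{k=1}^K\frac{g^{(k)}(a)}{k!}(x-a)^k=\frac{g^{(K+1)}(\xi)}{(K+1)!}(x-a)^{K+1}.
\end{equation*}
Since $a\ge 0$ and $|x-a|\le 1$, we have $\xi\ge -1$, so $m+\xi\ge m-1\ge 1$ for every $m\ge 2$. Bounding the series defining $g^{(K+1)}(\xi)$ term by term with $|u_m|=1$ yields
\begin{equation*}
|g^{(K+1)}(\xi)|\le K!\sum_{m=2}^\infty\frac{1}{(m+\xi)^{K+1}}\le K!\sum_{\ell=1}^\infty\frac{1}{\ell^{K+1}}=K!\,\zeta(K+1)\le K!\,\zeta(2),
\end{equation*}
which is legitimate precisely because $K+1\ge 2$ makes the dominating series absolutely convergent. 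The remainder is therefore at most $\zeta(2)|x-a|^{K+1}/(K+1)\le\zeta(2)/(K+1)\to 0$, and the claimed Taylor expansion follows.

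The delicate point, and the main obstacle, is the endpoint behaviour $|x-a|=1$: absolute convergence of the inner series $\sum u_n/(n+a)^k$ fails for $k=1$, and a naive coefficientwise bound of the form $|\mathrm{(coef)}|\le C$ gives only $\sum C\,|x-a|^k$, which diverges at the endpoints. The key observation that rescues the argument is that the Lagrange remainder only ever involves derivatives of order at least $2$, where the absolute bound $\zeta(K+1)$ is available and $|x-a|^{K+1}/(K+1)$ already tends to zero at the boundary; the conditionally convergent $k=1$ term sits safely inside the partial-sum part of the expansion and never has to be estimated in absolute value.
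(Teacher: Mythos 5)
Your proposal is correct and follows essentially the same route as the paper: both identify $\frac{d^{k}}{dx^{k}}\log h$ via the termwise-differentiated series from Theorem~\ref{smooth}, and both kill the Taylor remainder by bounding the $(K+1)$-st derivative absolutely by $K!\sum_{n\ge 2}(n+a-1)^{-(K+1)}\le K!\,\zeta(2)$ so that the remainder is $O(1/(K+1))$ even at $|x-a|=1$. Your explicit re-indexing of the paired series and your remark that the conditionally convergent $k=1$ term never enters the remainder estimate are worthwhile clarifications of steps the paper leaves implicit, but they do not change the argument.
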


\begin{proof} Let $H(x)=\log h(x)$. By Theorem~\ref{smooth}, 
\[H^{(k+1)}(x)=(-1)^kk!\sum_{n=2}^\infty\frac{u_n}{(n+x)^{k+1}}.\]
Hence
\[|H^{(k+1)}(x)|\le k!\sum_{n=2}^\infty\frac{1}{|n+x|^{k+1}}\le k!\sum_{n=2}^\infty\frac{1}{(n+a-1)^{k+1}}\]
for $x\in[a-1,a+1]$. So by Taylor's inequality, the remainder for the Taylor polynomial for $H(x)$ of degree $k$ is absolutely bounded above by
\[\frac{1}{k+1}\left(\sum_{n=2}^\infty\frac{1}{(n+a-1)^{k+1}}\right)|x-a|^{k+1}\]
which tends to $0$ as $k\to\infty$, since $a\ge 0$ and $|x-a|\le 1$. Therefore $H(x)$ equals its Taylor expansion about $a$ for $x$ in the given range.
\qed\end{proof}

\section{Infinite Products}
\label{sec:4}

Recall that
\begin{equation*}f(b,c)=\prod_{n=1}^\infty\left(\frac{n+b}{n+c}\right)^{u_n}.\end{equation*}
From Lemma~\ref{f} we see that
\begin{equation}\label{3-1}\prod_{n=1}^\infty\left(\frac{(n+b)(n+\frac{b+1}{2})(n+\frac c2)}{(n+c)(n+\frac{c+1}{2})(n+\frac b2)}\right)^{u_n}=\frac{c+1}{b+1}\end{equation}
for any $b,c\neq -1,-2,-3,\dots$, and if $b,c\neq 0,-1,-2,\dots$, then
\begin{equation}\label{3-2}\prod_{n=0}^\infty\left(\frac{(n+b)(n+\frac{b+1}{2})(n+\frac c2)}{(n+c)(n+\frac{c+1}{2})(n+\frac b2)}\right)^{u_n}=1.\end{equation}
Some interesting identities can be obtained from Eqs.~\eqref{3-1}--\eqref{3-2}. For example, in Eq.~\eqref{3-1}, taking $c=(b+1)/2$ gives
\begin{equation}\label{3-3}\prod_{n=1}^\infty\left(\frac{(n+b)(n+\frac{b+1}{4})}{(n+\frac{b+3}{4})(n+\frac b2)}\right)^{u_n}=\frac{b+3}{2(b+1)}\end{equation}
while taking $b=0$ gives
\begin{equation}\label{3-4}\prod_{n=1}^\infty\left(\frac{(n+\frac 12)(n+\frac c2)}{(n+c)(n+\frac{c+1}{2})}\right)^{u_n}=c+1\end{equation}
for any $b,c\neq -1,-2,-3,\dots$. %Some of the identities in \autoref{s1} are now easily obtained by choosing specific values for $b$ and $c$ in \eqref{3-3} and \eqref{3-4}.\\

We now turn our attention to the functional equation Eq.~\eqref{FE2}. Recall that it reads
\begin{equation*}h(x)=\frac{x+1}{x+\frac 32}h\left(x+\frac 12\right)h(2x).\end{equation*}
Taking $x=0$ gives
\begin{equation*}h(0)=\frac 23h\left(\frac 12\right)h(0).\end{equation*}
Since $1<h(0)<9/4$ by Theorem~\ref{hbound}, cancelling $h(0)$ from both sides gives $h(1/2)=3/2$. This shows that
\begin{equation}\label{b=0}\prod_{n=0}^\infty\left(\frac{4n+3}{4n+1}\right)^{u_n}=2.\end{equation}
Next, taking $x=1/2$ in Eq.~\eqref{FE2} gives
\begin{equation*}h\left(\frac 12\right)=\frac 34h(1)^2\end{equation*}
hence $h(1)=\sqrt 2$ (since $1<h(1)<16/9$ by Theorem~\ref{hbound}) and we recover the Woods-Robbins product
\begin{equation}\prod_{n=0}^\infty\left(\frac{2n+2}{2n+1}\right)^{u_n}=\sqrt{2}.\end{equation}
Similarly, taking $x=-1/2$ in Eq.~\eqref{FE2} gives 
\begin{equation*}h\left(-\frac 12\right)=\frac 12h(0)h(-1)=\frac 12f\left(0,\frac 12\right)f\left(-\frac 12,0\right)=\frac 12f\left(-\frac 12,\frac 12\right),\end{equation*}
i.e.,
\begin{equation}\label{b=-1/2}\prod_{n=1}^\infty\left(\frac{(4n-1)(2n+1)}{(4n+1)(2n-1)}\right)^{u_n}=\frac 12.\end{equation}
Taking $x=1$ in Eq.~\eqref{FE2} gives
\begin{equation*}h(1)=\frac 45h\left(\frac 32\right)h(2)\end{equation*}
hence $h(3/2)h(2)=5\sqrt 2/4$ and this gives
\begin{equation}\label{b=1}\prod_{n=0}^\infty\left(\frac{(4n+3)(2n+2)}{(4n+5)(2n+3)}\right)^{u_n}=\frac{1}{\sqrt 2}.\end{equation}
Taking $x=3/2$ in Eq.~\eqref{FE2} and using the previous result gives 
\begin{equation*}h(2)^2h(3)=\frac{3}{\sqrt 2}\end{equation*}
which is equivalent to
\begin{equation}\label{b=3/2}\prod_{n=0}^\infty\left(\frac{(2n+2)(n+1)}{(2n+3)(n+2)}\right)^{u_n}=\frac{1}{\sqrt 2}.\end{equation}

Eqs.~\eqref{b=0}--\eqref{b=3/2} can also be combined in pairs to obtain other identities.

\iffalse\begin{equation}&\prod_{n\ge 0}\left(\frac{(2n+1)(n+1)}{(2n+3)(n+2)}\right)^{u_n}=\frac{1}{2}\\
&\prod_{n\ge 0}\left(\frac{(4n+5)(n+1)}{(4n+3)(n+2)}\right)^{u_n}=1\\
&\prod_{n\ge 0}\left(\frac{(4n+5)(n+1)}{(4n+1)(n+2)}\right)^{u_n}=2\\
&\prod_{n\ge 0}\left(\frac{(4n+5)(2n+3)}{(4n+1)(2n+2)}\right)^{u_n}=2\sqrt 2\\
&\prod_{n\ge 0}\left(\frac{(4n+5)(2n+3)}{(4n+1)(2n+1)}\right)^{u_n}=4\end{equation}
or combined together:
\begin{multline}\prod_{n\ge 0}\left(\frac{n+1}{n+2}\right)^{u_n}=\prod_{n\ge 0}\left(\frac{4n+3}{4n+5}\right)^{u_n}=2\prod_{n\ge 0}\left(\frac{4n+1}{4n+5}\right)^{u_n}\\=\frac{1}{\sqrt 2}\prod_{n\ge 0}\left(\frac{2n+3}{2n+2}\right)^{u_n}=\frac{1}{2}\prod_{n\ge 0}\left(\frac{2n+3}{2n+1}\right)^{u_n}\end{multline}\fi

\section{Concluding Remarks}
\label{sec:5}

The quantity $h(0)\approx 1.62816$ appears to be of interest \cite{A,AS1}. It is not known whether its value is irrational or transcendental. We give the following explanation as to why $h(0)$ might behave specially in a sense.

Note that the only way non-trivial cancellation occurs in the functional equation Eq.~\eqref{FE2} is when $b=0$. Likewise, non-trivial cancellation occurs in Eq.~\eqref{f-h} or property \ref{p3} in Lemma~\ref{f} only for $(b,c)=(0,1/2)$ and $(1/2,0)$. That is, the victim of any such cancellation is always $h(0)$ or $h(0)^{-1}$. So one must look for other ways to understand $h(0)$. 

Using the only two known values $h(1/2)=3/2$ and $h(1)=\sqrt 2$, the following expressions for $h(0)$ can be obtained from Theorem~\ref{series}.
\begin{itemize}
\item By taking $x=0$ and $a=1$,
\[h(0)=\sqrt 2\exp\left(-\sum_{k=1}^\infty\frac 1k\sum_{n=2}^\infty\frac{u_n}{(n+1)^k}\right).\]
\item By taking $x=1$ and $a=0$,
\[h(0)=\sqrt 2\exp\left(\sum_{k=1}^\infty\frac{(-1)^k}{k}\sum_{n=2}^\infty\frac{u_n}{n^k}\right).\]
\item By taking $x=0$ and $a=1/2$,
\[h(0)=\frac 32\exp\left(\sum_{k=1}^\infty\frac 1k\sum_{n=2}^\infty\frac{u_{2n+1}}{(2n+1)^k}\right)
%=\frac 32\exp\left(\sum_{k=1}^\infty\frac{2^k-1}{2^kk}\sum_{n=2}^\infty\frac{u_{n}}{n^k}\right)
.\]
\item By taking $x=1/2$ and $a=0$,
\[h(0)=\frac 32\exp\left(\sum_{k=1}^\infty\frac{(-1)^k}{k}\sum_{n=2}^\infty\frac{u_{2n}}{(2n)^k}\right)
%=\frac 32\exp\left(\sum_{k=1}^\infty\frac{(-1)^k}{2^kk}\sum_{n=2}^\infty\frac{u_{n}}{n^k}\right)
.\]
\end{itemize}
The Dirichlet series
\[\sum_{n=0}^\infty\frac{u_n}{(n+1)^k}\quad\hbox{and}\quad\sum_{n=1}^\infty\frac{u_n}{n^k}\]
appearing in the above expressions were studied by Allouche and Cohen \cite{AC}.

%Possible further questions to look into are: is $f$ the unique function satisfying the properties in Lemma~\ref{f}? Is $h$ the unique solution to the functional equation \eqref{FE2}? We leave these questions for future research.

\begin{acknowledgement}
This work is part of a larger joint work \cite{ARS} with Professors Jean-Paul Allouche and Jeffrey Shallit. I thank the professors for helpful discussions and comments. I also thank the anonymous referees for their feedback.
\end{acknowledgement}

\end{document}